\documentclass[]{article}
\usepackage{amsmath, amsthm, amsfonts, amssymb, enumitem}
\usepackage[english]{babel}
\newtheorem{thm}{Theorem}[section]
\newtheorem{cor}[thm]{Corollary}
\newtheorem{lem}[thm]{Lemma}
\newtheorem{obs}[thm]{Observation}
\theoremstyle{remark}
\newtheorem{rem}[thm]{Remark}

\newcommand{\dom}{\mathrm{dom}}

\newcommand{\NN}{\mathbb{N}}
\newcommand{\ZZ}{\mathbb{Z}}

\newcommand{\RR}{\mathbb{R}}
\newcommand{\Eins}{\mathbf{1}}
\newcommand{\cH}{\mathcal{H}}
\newcommand{\cK}{\mathcal{K}}

\newcommand{\ran}{\mathrm{ran}}
\newcommand{\set}[2]{\bigl\{#1\bigm|#2\bigr\}}
\begin{document}
	\begin{center}
		{\LARGE \bf Uncertainty principles and lower bounds\\[0.5ex] for Schr\"odinger operators}
		
		\vspace{0.5cm}
		{\large A. B\"ottcher\footnote{Fakult\"{a}t f\"{u}r Mathematik,  Technische
				Universit\"{a}t Chemnitz, 09107 Chemnitz, Germany, 
				{\tt aboettch@mathematik.tu-chemnitz.de}},
			P. Stollmann\footnote{Fakult\"{a}t f\"{u}r Mathematik,  Technische
				Universit\"{a}t Chemnitz, 09107 Chemnitz, Germany, 
				{\tt P.Stollmann@mathematik.tu-chemnitz.de}},
			and M. Tautenhahn\footnote{Mathematisches Institut,
				Universit\"at Leipzig,  Augustusplatz 10, 04109 Leipzig, Germany,
				{\tt martin.tautenhahn@math.uni-leipzig.de}}}
		
	\end{center}
	
	\vspace{0.2cm}
	\begin{abstract}
		\noindent
		We prove that two different abstract quantitative uncertainty principles are equivalent to the strict positivity of the associated abstract Schr\"o\-din\-ger operators. We also discuss the case of continuum Schr\"odinger operators, in which case our method provides an explicitly computable lower bound as well as a control theoretic application.
	\end{abstract}
	
	\section{Introduction}
	
	This article is concerned with uncertainty principles at low energy for Schr\"o\-ding\-er operators. 
	A typical manifestation of such an uncertainty principle might be
	\begin{equation} \label{eq:intro1}
		P_E V P_E \geq \kappa P_E ,
	\end{equation}
	where, in a Hilbert space $\mathcal{H}$, $P_E=\boldsymbol{1}_{[0,E]}(H)$ is the spectral projector associated 
	to some selfadjoint operator $H\ge 0$ onto energies below $E \geq \min \sigma (H)$, 
	$V$ is a non-negative selfadjoint operator, and $\kappa = \kappa (E)$ is some positive constant. 
	The most prominent example is the case where $H=-\Delta$ is the negative Laplacian in $L^2 (\RR^d)$ and $V = \Eins_D$ with a so-called thick set $D \subset \RR^d$. In that case, the Logvinenko-Sereda theorem implies \eqref{eq:intro1} for all $E > 0$ with an explicit constant $\kappa$ depending on $E$ and the geometric properties of the set $D$; see \cite{LogvinenkoS-74,Kovrijkine-00,Kovrijkine-01}. It is called an uncertainty principle since it says that 
	$$
	\| f\Eins_D\|^2\ge \| f\|^2\mbox{  for all }f\in\ran(P_E) , 
	$$
	so that 
	states of low energy are spread out in configuration space in the sense that they carry a reasonable amount of mass on the set $D$. The proof of the Logvinenko-Sereda theorem strongly relies on harmonic analysis. The presence of a potential and of more general domains $\Omega \subset \RR^d$, i.e., considering $H = -\Delta + W$ in $L^2 (\Omega)$, places the problem beyond the reach of purely harmonic analysis methods. This setting has been investigated, e.g., in \cite{Klein-13,RojasMolinaV-13,NakicTTV-18,LebeauM-19,DickeSV-24} under different assumptions on the set $D$ and the potential $W$.
	
	\medskip
	From the viewpoint of applications, uncertainty relations of the type \eqref{eq:intro1} have become an essential tool
	in spectral theory of random Schr\"odinger operators, control theory, or harmonic analysis, to mention a few. In the theory of random Schr\"odinger operators such uncertainty relations are typically used to prove Wegner estimates and in turn Anderson localization; see, for example, \cite{BoutetdeMonvelLS-11,Klein-13,RojasMolinaV-13,NakicTTV-18,StollmannS-21}. In control theory, 
	uncertainty relations serve as an input to the Lebeau-Robbiano strategy \cite{LebeauR-95}, originally formulated for the Laplace-Beltrami operator on compact Riemannian manifolds~$M$. The general philosophy here is that an uncertainty relation implies an observability estimate, and by Douglas' lemma \cite{Douglas-66}, this is equivalent to null-controllability of a certain linear control problem. Subsequently, this strategy has been studied in various mathematical situations, including abstract Hilbert or Banach spaces; see, e.g., \cite{LebeauZ-98,JerisonL-99,Miller-10,TenenbaumT-11,WangZ-17,BeauchardP-18,NakicTTV-20,GallaunST-20}. While the original Lebeau-Robbiano strategy requires \eqref{eq:intro1} to hold for all $E > 0$ with a constant $\kappa = \kappa (E)$ not vanishing too fast as $E$ tends to infinity, there exist also weak versions where the validity of \eqref{eq:intro1} for some (eventually small) fixed $E$ is sufficient; cf. the recent papers \cite{EgidiGST-24,MuenchSST}.
	
	\medskip
	Let us now turn to our contribution. Our key result is Theorem \ref{Theo3.2}, which gives a computable lower bound for abstract Schr\"odinger
	operators. This theorem plays a crucial role in our proof of the equivalence of the uncertainty principle known 
	from~\cite{CombesHK-03,CombesHK-07,BoutetdeMonvelLS-11}  and the strict positivity of Schr\"odinger operators. Moreover, this theorem
	allows us to pass from potentials of the form $V=\Eins_D$ with relatively dense sets $D$ to 
	non-negative potentials $V$ subject to what we call $(R,\varrho)$-thickness. 
	
\medskip
	It is evident that the technique employed here could equally well be applied in the discrete case of graph Laplacians;
	we refer to \cite{LenzSS-19} for earlier results on graphs and to the upcoming \cite{LenzS}, which features a metric measure 
	space set-up that includes graphs and continuum configuration spaces.  
	
	\medskip
	In contrast to the equivalent quantitative uncertainty principles mentioned above, 
	a qualitative uncertainty principle for $H$ and $V$ can be thought of as a unique-continuation statement: 
	for suitable $E$, it requires that if $f\in\ran(P_E)$
	and $Vf=0$, then necessarily $f=0$. It is a rather simple observation  that such a unique-continuation 
	is a consequence of the quantitative uncertainty principles mentioned earlier. For $H=-\Delta$, the fact that functions 
	in the range of the spectral projector are analytic shows that in this case 
	the qualitative uncertainty principle holds for
	arbitrary $E$ and every $V\ge 0$ that is not $0$ almost everywhere, whereas a quantitative uncertainty principle requires $V$ to be $(R,\varrho)$-thick for some $R,\varrho>0$.
	\medskip
	
	In the final section we sketch an application of our method to control theory: it provides $\alpha$-controllability with arbitrary $\alpha>0$ for a new and fairly general class of control problems.

	\section{Main results}
	
	We start in a general setting: $(\mathcal{H}, (\cdot|\cdot))$ denotes a Hilbert space, $H, V$ are selfadjoint
	operators in $\mathcal{H}$, $V$ is supposed to be bounded, and we require that  $H,V \ge0$.
	 As usual, if $A$ is selfadjoint and $\varepsilon \in \RR$,
	then
	$A \ge \varepsilon$ means that $(Af|f) \ge \varepsilon (f|f)$ for
	all $f \in {\rm dom}(A)$, which in turn is equivalent to the requirement that the spectrum $\sigma(A)$ of $A$ is
	contained in $[\varepsilon,\infty)$. 
	We call $A$ {\bf strictly positive} and write $A \gg 0$ if $A \ge \varepsilon$
	for some $\varepsilon > 0$. Most of what follows is only interesting in the case
	where $0 \in \sigma(H)$, that is, where $H$ is not strictly positive.
	
	\medskip
	For $E > 0$, 
	we denote by $P_E := \boldsymbol{1}_{[0,E]}(H)$ the spectral projector of $H$ associated with $[0,E]$.
	\medskip
	Following \cite{BoutetdeMonvelLS-11}, we say that a {\bf quantitative uncertainty principle for spectral projectors} holds for the pair $H,V$ 
	at $E$ if there is a $\kappa=\kappa(H,V,E) >0$ such that
	\begin{equation}\label{sUP}\tag{$\sigma$UP}
		P_E V P_E \ge \kappa P_E,
	\end{equation} 
	where, of course, $P_E V P_E \ge \kappa P_E$ means that $P_E V P_E - \kappa P_E \ge 0$. 
	
	\medskip
	We say that a  {\bf quantitative uncertainty principle at low energy} holds for the pair $H,V$ 
	at $E$ if
	there exists a $\mu=\mu(H,V,E)>0$ such that
	\begin{equation}\label{EUP}\tag{$\mathcal{E}$UP}
		f \in {\rm dom}(H) \;\: \land \;\: (Hf|f)\le E \lVert f \rVert^2  \;\: \Longrightarrow \,\: (Vf|f) \ge \mu \lVert f\rVert^2.
	\end{equation}
	Clearly, (\ref{EUP}) implies (\ref{sUP}) with $\kappa=\mu$: since $(HP_Ef|P_Ef) \le E \lVert P_Ef\rVert^2$,
	we infer from (\ref{EUP}) that
	$(P_EVP_Ef|P_Ef) =(VP_Ef|P_Ef) \ge \mu \lVert P_Ef\rVert^2$.
	
	
	\medskip
	Here is our abstract main characterization of quantitative uncertainty principles.

	\begin{thm} \label{Theo2.1}
    The following are equivalent:
        \begin{enumerate}[label=(\roman*),font=\upshape,noitemsep,topsep=0pt]
			\item $H+V \gg 0$;
			\item $H$ and $V$ obey  {\rm (\ref{sUP})} at some $E=E_1>0$ with suitable $\kappa$;
			\item $H$ and $V$ obey  {\rm (\ref{EUP})} at some $E=E_2>0$ with suitable $\mu$.
		\end{enumerate}
	\end{thm}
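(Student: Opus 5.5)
We prove the chain (iii)$\Rightarrow$(ii)$\Rightarrow$(i)$\Rightarrow$(iii). The implication (iii)$\Rightarrow$(ii) was already observed before the statement: (\ref{EUP}) at $E_2$ gives (\ref{sUP}) at $E_1=E_2$ with $\kappa=\mu$.

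For (i)$\Rightarrow$(iii), assume $H+V\ge\varepsilon>0$ and write $\|V\|$ for the operator norm. Let $f\in\dom(H)$ with $(Hf|f)\le E\|f\|^2$. Then
\[
(Vf|f)\ge \varepsilon\|f\|^2-(Hf|f)\ge(\varepsilon-E)\|f\|^2 ,
\]
so (\ref{EUP}) holds at any $E_2\in(0,\varepsilon)$ with $\mu=\varepsilon-E_2$. This step is direct.

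The substance is (ii)$\Rightarrow$(i). Set $P=P_{E}$, $Q=I-P$ with $E=E_1$, and let $f\in\dom(H)$. Write $f=Pf+Qf=:g+h$. Since $P$ is a spectral projector of $H$,
\[
(Hf|f)=(Hg|g)+(Hh|h)\ge E\|h\|^2 .
\]
For the potential term, expand using $V\ge0$ and boundedness:
\[
(Vf|f)=(Vg|g)+2\,\mathrm{Re}(Vg|h)+(Vh|h).
\]
By (\ref{sUP}) we have $(Vg|g)\ge\kappa\|g\|^2$. To control the cross term, the cleanest route is the Cauchy--Schwarz inequality for the nonnegative form $(V\cdot|\cdot)$:
\[
|(Vg|h)|\le (Vg|g)^{1/2}(Vh|h)^{1/2}.
\]
Combining this with $(Vh|h)\le\|V\|\|h\|^2$ gives, for any $\delta\in(0,1)$,
\[
(Vf|f)\ge(1-\delta)(Vg|g)-\Bigl(\tfrac1\delta-1\Bigr)(Vh|h)\ge(1-\delta)\kappa\|g\|^2-\Bigl(\tfrac1\delta-1\Bigr)\|V\|\,\|h\|^2 .
\]
Now pick $t\in(0,1]$ and use $H+V\ge tH+V$ (valid since $H\ge0$). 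This yields
\[
((H+V)f|f)\ge(1-\delta)\kappa\|g\|^2+\Bigl(tE-\bigl(\tfrac1\delta-1\bigr)\|V\|\Bigr)\|h\|^2 .
\]
Fix $\delta=1/2$ and take $t=1$; then we need $E>\|V\|$, which may fail. So instead choose $\delta$ close to $1$: with $\tfrac1\delta-1=\frac{E}{2\|V\|}$ (assuming $V\ne0$; if $V=0$, (\ref{sUP}) forces $P=0$, so $H\ge E$ and we are done), we get
\[
((H+V)f|f)\ge(1-\delta)\kappa\|g\|^2+\tfrac E2\|h\|^2\ge\min\bigl\{(1-\delta)\kappa,\tfrac E2\bigr\}\|f\|^2 ,
\]
where $1-\delta=\frac{E}{E+2\|V\|}$. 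This gives an explicit $\varepsilon$.

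The main obstacle is exactly this cross term $(Vg|h)$ between low- and high-energy parts. The trick of using the form Cauchy--Schwarz with weight $\delta$ close to $1$, so that the loss lands on $\|h\|^2$ where $H$ provides a gain, is what makes it work. I expect this to be the content behind the later Theorem \ref{Theo3.2}. Domain issues are harmless: $g\in\ran P\subset\dom(H)$, and the form inequalities extend to the form domain, so $\sigma(H+V)\subset[\varepsilon,\infty)$.
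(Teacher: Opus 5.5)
Your proof is correct. The cycle of implications and the split $f=P_Ef+(I-P_E)f$ with $(Hf|f)\ge E\|(I-P_E)f\|^2$ are as in the paper. The difference is how you control the cross term $2\,\mathrm{Re}(Vg|h)$ in (ii)$\Rightarrow$(i).

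The paper (Theorem~\ref{Theo3.2}) normalizes both components. It writes $(H\eta|\eta)$ and $(V\eta|\eta)$ as quadratic forms of $2\times 2$ matrices $A=\mathrm{diag}(0,\gamma)$ and $B$ in the vector $(\|\zeta\|,\|\xi\|)$. It then bounds the smallest eigenvalue of $A+B$ by $\det(A+B)/\|A+B\|$ (Lemma~\ref{Lem3.1}) and $\|B\|\le 2\|V\|$ through the Frobenius norm, and has to treat $\zeta=0$ or $\xi=0$ separately. You instead use the Cauchy--Schwarz inequality for the form $(V\cdot|\cdot)$ together with $2\sqrt{ab}\le\delta a+b/\delta$. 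This is more elementary and needs neither normalization nor case distinctions.

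Your constant $\min\{\kappa E/(E+2\|V\|),E/2\}$ equals the paper's $\kappa E/(E+2\|V\|)$ whenever $P_E\neq 0$, since then $\kappa\le\|V\|<(E+2\|V\|)/2$. In the degenerate case $P_E=0$, \eqref{sUP} holds for every $\kappa$, and your extra entry $E/2$ is what keeps the bound true.

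The paper's route gets the constant from a reusable matrix lemma without tuning a parameter. Yours transfers verbatim to the abstract setting of Theorem~\ref{Theo3.2} (replace $E,\kappa,\ran P_E$ by $c,v,\mathcal{K}$), and hence to Theorem~\ref{Theo4.1}. It also shows the bound is not sharp: the two entries of your minimum are unbalanced, so a slightly larger $1-\delta$ strictly improves it. The auxiliary parameter $t$ plays no role and can be dropped.
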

	
	We now turn to the special case where $H=-\Delta: H^2(\mathbb{R}^d) \to L^2(\mathbb{R}^d)$
	is the negative Laplacian
	and $V$ is the operator of multiplication by a non-negative function in $L^\infty(\mathbb{R}^d)$.
	We denote this function by $V$, too. Given two numbers $R,\varrho >0$,
	the function $V$ is said to be $\boldsymbol{(R,\varrho)}${\bf-thick} if
	\[\frac{1}{R^d}\int_{C_R(y)} V(x) \mathrm{d}x \ge \varrho\]
	for all $y \in \mathbb{R^d}$, where $C_R(y)=y+(0,R)^d$, so that the left hand side is the mean of $V$ over $C_R(y)$.
	Using appropriate trial functions one can show that
	\begin{equation} \label{Dia}\tag{$\Diamond$}
		\inf \sigma(-\Delta+V)\le\liminf_{R \to \infty}
		\left(\inf_{y \in \mathbb{R}^d}\frac{1}{R^d}\int_{C_R(y)} V(x) \mathrm{d}x\right),
	\end{equation} 
	an estimate very reminiscent of results from~\cite{GGS92}. 
	We will give a proof of this inequality in Section~\ref{Sec4}.
	Herewith our second main result.
	
	\begin{thm} \label{Theo2.2}
		The following are equivalent:
    \begin{enumerate}[label=(\roman*),font=\upshape,noitemsep,topsep=0pt]
		\item $-\Delta+V \gg 0$;
		\item $V$ is $(R,\varrho)$-thick for some $R,\varrho$;
		\item the lower limit on the right of {\rm (\ref{Dia})} is positive.
    \end{enumerate}
	\end{thm}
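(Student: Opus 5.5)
We prove the cycle (i)$\Rightarrow$(iii)$\Rightarrow$(ii)$\Rightarrow$(i), using the estimate (\ref{Dia}) and Theorem~\ref{Theo2.1}.

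\emph{(i)$\Rightarrow$(iii).} This follows directly from (\ref{Dia}). If $-\Delta+V\ge\varepsilon>0$, then $\inf\sigma(-\Delta+V)\ge\varepsilon$. Hence the lower limit on the right of (\ref{Dia}) is at least $\varepsilon$, and in particular it is positive.

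\emph{(iii)$\Rightarrow$(ii).} Suppose the lower limit is some $\ell>0$. Then there is an $R$ with
\[
\inf_{y}\frac{1}{R^d}\int_{C_R(y)}V(x)\,\mathrm{d}x\ge \ell/2 .
\]
This says exactly that $V$ is $(R,\ell/2)$-thick.

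\emph{(ii)$\Rightarrow$(i).} This is the substantive step. By Theorem~\ref{Theo2.1} it suffices to prove (\ref{EUP}) for some small $E>0$. Equivalently, we need a direct lower bound on $(-\Delta f|f)+(Vf|f)$ in terms of $\|f\|^2$ for $f\in H^1(\RR^d)$. We work locally on the tiling of $\RR^d$ by cubes $C_j=C_R(Rj)$, $j\in\ZZ^d$, each with mean of $V$ at least $\varrho$.

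On each cube, split $f=m_j+g_j$, where $m_j=R^{-d}\int_{C_j}f$ is the mean and $g_j$ has zero mean. The Poincar\'e inequality on a cube gives
\[
\|g_j\|_{L^2(C_j)}^2\le (R/\pi)^2\,\|\nabla f\|_{L^2(C_j)}^2 .
\]
For the constant part, $\int_{C_j}V|m_j|^2\ge \varrho R^d|m_j|^2=\varrho\|m_j\|_{L^2(C_j)}^2$. Note that $V\le\|V\|_\infty$. Using $|f|^2\ge (1-\delta)|m_j|^2-(\delta^{-1}-1)|g_j|^2$, we get
\[
\int_{C_j}V|f|^2\ge (1-\delta)\varrho\|m_j\|^2-(\delta^{-1}-1)\|V\|_\infty\|g_j\|^2 .
\]
We also have $\|f\|^2_{L^2(C_j)}=\|m_j\|^2+\|g_j\|^2$, by orthogonality of constants and mean-zero functions. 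Add to this a fraction $t$ of the gradient term, written as $t\|\nabla f\|^2\ge t(\pi/R)^2\|g_j\|^2$. Choosing, for instance, $\delta=1/2$ and $t\in(0,1]$, we obtain
\[
t\|\nabla f\|^2_{L^2(C_j)}+\int_{C_j}V|f|^2\ge \tfrac{\varrho}{2}\|m_j\|^2+\bigl(t\pi^2R^{-2}-\|V\|_\infty\bigr)\|g_j\|^2 .
\]
The bracket can be negative, so this must be rebalanced. Take $t=1$ and use only a fraction $s\in(0,1]$ of $V$, so that $\|V\|_\infty$ is replaced by $s\|V\|_\infty$. Then choose $s$ small with $s\|V\|_\infty\le \pi^2/(2R^2)$. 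The result is
\[
\|\nabla f\|^2_{L^2(C_j)}+\int_{C_j}V|f|^2\ge \min\bigl(s\varrho/2,\ \pi^2/(2R^2)\bigr)\,\|f\|^2_{L^2(C_j)} .
\]
Summing over $j$ gives $-\Delta+V\ge\varepsilon>0$ with an explicit $\varepsilon$. This direct argument makes the appeal to Theorem~\ref{Theo2.1} unnecessary.

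The main obstacle is this last balancing. Thickness controls only the mean of $V$, while $V$ may be concentrated on a small part of each cube. The Poincar\'e inequality has to absorb the fluctuation $g_j$, with the loss weighted by $\|V\|_\infty$. This is where the boundedness of $V$ is essential. We must also check that the Neumann Poincar\'e constant $R/\pi$ on the cube is what the argument needs.
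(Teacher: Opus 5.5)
Your proof is correct. The steps (i)$\Rightarrow$(iii) and (iii)$\Rightarrow$(ii) are exactly the paper's. For (ii)$\Rightarrow$(i) you use the same splitting as the paper. On each cube of the tiling you separate constants from mean-zero functions, i.e.\ the kernel of the Neumann Laplacian and its orthogonal complement, and use the Neumann Poincar\'e gap $\pi^2/R^2$. Where you differ is in how the cross term is absorbed and how the local bounds are globalized.

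\emph{The paper's route.} It feeds this splitting into the abstract Theorem~\ref{Theo3.2}. There the cross term $(V\zeta|\xi)$ is controlled by the $2\times 2$ matrix bound $A+B\ge\det(A+B)/\lVert A+B\rVert$ of Lemma~\ref{Lem3.1}. It then passes to $\RR^d$ by citing Dirichlet--Neumann bracketing.

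\emph{Your route.} You absorb the cross term with Young's inequality at $\delta=1/2$. You then give up a fraction $s$ of $V$ so that $s\lVert V\rVert_\infty$ fits under half the Poincar\'e gap. You do the bracketing by hand, restricting $f\in H^1(\RR^d)$ to the cubes and summing the forms.

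\emph{What each buys.} Your argument is fully elementary and self-contained: no reducing subspaces, no matrix lemma, no appeal to Reed--Simon. The paper's route gives reusability, since the same Theorem~\ref{Theo3.2} with $\cK=\ran(P_E)$ also proves (ii)$\Rightarrow$(i) of Theorem~\ref{Theo2.1}. It also gives a slightly sharper constant. Because $\varrho\le\lVert V\rVert_\infty$, your $\varepsilon$ reduces to $\tfrac{\varrho}{2}\min\{1,\pi^2/(2R^2\lVert V\rVert_\infty)\}$. This is never larger than, and never less than half of, the paper's $\pi^2\varrho/(\pi^2+2R^2\lVert V\rVert_\infty)$.

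On your closing worry: the first nonzero Neumann eigenvalue of $(0,R)^d$ is indeed $\pi^2/R^2$. In any case, any positive Poincar\'e constant would suffice for strict positivity.
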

	
	The only non-trivial implication of this equivalence is (ii)$\Longrightarrow$(i); the attentive reader will note that for $V=\Eins_D$ this could be readily deduced from the above mentioned Logvinenko-Sereda Theorem combined with Theorem \ref{Theo2.1}. However, we prefer to give a computable lower bound on $\inf \sigma(-\Delta+V)$ in Theorem~\ref{Theo4.1} and Corollary~\ref{Cor4.2}. These results are again based on Theorem \ref{Theo3.2}. We want to stress the fact that the previous result gives uncertainty principles for large classes of operators, namely for $H\ge \eta(-\Delta)$, $\eta>0$ and $(R,\varrho)$-thick $V$. In Subsection~5.2 we discuss a concrete class of examples that can be treated in this way.  
		\medskip
	
	We say that a {\bf qualitative uncertainty principle} holds for the pair $H,V$ at $E$ if
	\begin{equation}\label{QUP}\tag{QUP}
		f \in \ran(P_E) \;\: \land \;\: Vf=0 \;\: \Longrightarrow \,\: f=0.
	\end{equation} 
	It is easily seen
	that ($\sigma$UP) and thus each of the three equivalent conditions
	in Theorem~\ref{Theo2.1} imply (\ref{QUP}). Indeed, if $f \in P_E(\mathcal{H})$ and $Vf=0$, then \eqref{sUP} gives
	\[0=(Vf|f)=(VP_Ef|P_Ef)=(P_EVP_Ef|P_Ef)\ge \kappa (P_Ef|P_Ef)=\kappa (f|f),\]
	implying that $f=0$. The converse implication is totally wrong:
	
	\begin{obs} \label{Theo2.3} Let $H=-\Delta$, $V\in L^\infty(\RR^d)\setminus\{0\}$, $V\ge 0$. 
		Then {\rm (\ref{QUP})} holds for $H,V$ and every $E\ge 0$.
	\end{obs}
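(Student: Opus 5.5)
Let $E\ge 0$ and let $f\in\ran(P_E)$ satisfy $Vf=0$. The plan is to use the Fourier transform $\mathcal F$ to show that $f$ is the restriction of an entire function on $\mathbb{C}^d$. Then we show that $f$ vanishes on a set of positive measure, and the identity theorem forces $f=0$.

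\textbf{Step 1: $f$ is real analytic, in fact entire.} Since $-\Delta$ is unitarily equivalent via $\mathcal F$ to multiplication by $|\xi|^2$, the spectral projector $P_E=\Eins_{[0,E]}(-\Delta)$ corresponds to multiplication by the indicator of the closed ball $\overline{B}_{\sqrt E}(0)$. Hence $f\in\ran(P_E)$ means that $\hat f=\mathcal F f$ is supported in the compact ball $\overline{B}_{\sqrt E}(0)$, and $\hat f\in L^2\subset L^1$ on that ball. The Fourier inversion formula then gives, almost everywhere,
\[
f(x)=(2\pi)^{-d/2}\int_{|\xi|\le\sqrt E}\hat f(\xi)e^{ix\cdot\xi}\,\mathrm d\xi .
\]
The right-hand side extends to an entire function $F(z)=(2\pi)^{-d/2}\int \hat f(\xi)e^{iz\cdot\xi}\,\mathrm d\xi$ on $\mathbb{C}^d$. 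Holomorphy follows by differentiating under the integral sign, which is justified by the compact support and the bound $|e^{iz\cdot\xi}|\le e^{|\mathrm{Im}\,z|\sqrt E}$. This is the Paley--Wiener direction. The case $E=0$ is trivial, since then $\hat f$ lives on $\{0\}$, a null set, so $f=0$.

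\textbf{Step 2: vanishing on a set of positive measure.} Since $V\ge0$ and $V\ne0$ in $L^\infty$, the set $D=\{V>0\}$ has positive Lebesgue measure. From $Vf=0$ a.e. we get $f=0$ a.e. on $D$. Thus the real analytic function $F|_{\RR^d}$, which equals $f$ a.e., vanishes on a set of positive measure.

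\textbf{Step 3: identity theorem.} The only point needing care is the following standard fact: a real analytic function $g$ on the connected set $\RR^d$ whose zero set has positive measure vanishes identically. We would argue by induction on $d$ using Fubini. For $d=1$, zeros of a nonzero analytic function are isolated, hence countable, hence a null set. For general $d$, assume $g\not\equiv0$. Then for almost every $x'\in\RR^{d-1}$ the slice $t\mapsto g(t,x')$ is not identically zero. Indeed, the set of $x'$ where it vanishes identically is the zero set of the analytic functions $x'\mapsto \partial_t^k g(0,x')$, $k\ge0$. Not all of these vanish identically, because otherwise $g\equiv0$ by analyticity in $t$. So by induction that set of $x'$ is null. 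For the remaining $x'$ the slice has a null zero set by the case $d=1$, and Fubini shows $\{g=0\}$ is null, a contradiction. Therefore $F|_{\RR^d}\equiv0$, so $f=0$ in $L^2$.

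The main obstacle is this measure-theoretic identity theorem in Step 3. The analyticity in Step 1 is routine via Paley--Wiener.
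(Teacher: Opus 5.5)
Your proof is correct and follows the paper's route. Elements of $\ran(P_E)$ are analytic via Fourier inversion over the ball, $f$ vanishes a.e.\ on the positive-measure set $\{V>0\}$, and the identity theorem (a nontrivial analytic function has a null zero set) forces $f=0$. The only difference is that you prove this identity theorem yourself, correctly, by induction on $d$ with Fubini, whereas the paper simply cites it from the literature.
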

	
	The following two sections are devoted to the proofs of the two theorems stated above.
	In Section 5 we deal with Observation 2.3, show that in the Schrödinger operator set-up, we get quantitative uncertainty principles for $-\Delta$ replaced by rather general classes of operators and give some concrete examples, both of strictly positive
	operators $-\Delta+V$ and of such operators that are not strictly positive. The last sections contains a short discussion of applications of the above results to control theory.

	\section{The abstract setting}
	
	In this section {\em we prove Theorem \ref{Theo2.1}}.
	
	\medskip
	The equivalence (i) $\Longleftrightarrow$ (iii) is readily seen. Indeed, if (i) holds with $H+V\ge E_0$, then  (iii) holds with $E\in (0,E_0)$ and $\mu=E_0-E$. 
	Conversely, suppose (iii) is true. Then 
	$H+V \ge \min(E,\mu)$, which is (i).
	
	\medskip
	The obvious implication (iii) $\Longrightarrow$ (ii)  was already mentioned in the previous section.
	We are therefore left with the implication (ii) $\Longrightarrow$ (i), whose proof will
	occupy the rest of this section. We start with a simple result for $2 \times 2$ matrices.
	
	\begin{lem}\label{Lem3.1}
		Let 
		\[
		 A=\begin{pmatrix} x & z\\ \overline{z} & y \end{pmatrix} 
		 \quad\text{and}\quad 
		 B=\begin{pmatrix} \beta & w\\ \overline{w} & \delta \end{pmatrix}
		\]
		be positive matrices, $A,B \ge 0$, and suppose $A+B$ is not the zero matrix. Then $A+B$ is positive and
		\begin{equation}\label{eqAB}
			A+B \ge \frac{(x+\beta)(y+\delta)-\lvert z+w \rvert^2}{\lVert A+B \rVert}.
		\end{equation}
	\end{lem}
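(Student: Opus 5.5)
Set $C:=A+B$. As a sum of positive semidefinite matrices, $C$ is again positive semidefinite, and by hypothesis $C\neq 0$. Its entries are $x+\beta$ and $y+\delta$ on the diagonal and $z+w$, $\overline{z+w}$ off the diagonal. So the numerator in \eqref{eqAB} is exactly $\det C$, and the claim reduces to a statement about a single nonzero positive semidefinite $2\times 2$ matrix:
\[
C \ge \frac{\det C}{\lVert C\rVert}.
\]
This is purely spectral, so I would argue through the eigenvalues of $C$.

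Let $0\le \lambda_1\le\lambda_2$ be the eigenvalues of $C$. Since $C$ is selfadjoint, $\lVert C\rVert=\lambda_2$. Since $C\neq 0$, we have $\lambda_2>0$, so the right-hand side is well defined. Also $\det C=\lambda_1\lambda_2$, which gives
\[
\frac{\det C}{\lVert C\rVert}=\lambda_1=\min\sigma(C).
\]
By the characterization of $C\ge\varepsilon$ through $\sigma(C)\subset[\varepsilon,\infty)$ recalled in Section 2, this is exactly $C\ge \det C/\lVert C\rVert$. In fact equality holds for the bottom of the spectrum.

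The statement also asserts that $A+B$ is positive. In the weak sense, $C\ge0$ is immediate from $(Cf|f)=(Af|f)+(Bf|f)\ge0$. If strict positivity were intended, it fails when $\det C=0$, and \eqref{eqAB} then only gives $C\ge0$; so I read ``positive'' as $\ge0$, consistent with the hypotheses $A,B\ge0$. There is no real obstacle here. The only point needing care is identifying the numerator with $\det(A+B)$ and using $\lVert C\rVert=\lambda_{\max}$, which holds for selfadjoint matrices and requires $C\ne0$ so that we do not divide by zero.
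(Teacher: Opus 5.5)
Your proposal is correct and follows the paper's proof essentially verbatim. You get $A+B\ge 0$ from the sum of the quadratic forms, note that the largest eigenvalue is nonzero because $A+B\neq 0$, and conclude $A+B\ge\lambda_{\min}=\det(A+B)/\lVert A+B\rVert$. Your reading of ``positive'' as $\ge 0$ also matches the paper's usage.
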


	\begin{proof} Since $((A+B)\xi |\xi)=(A\xi |\xi)+(B\xi |\xi)\ge 0$ for all $\xi$, we see that $A+B \ge 0$.
	Let $\lambda_1 \ge \lambda_2 \ge 0$ be the eigenvalues of $A+B$. As $\lambda_1$ and $\lambda_2$ cannot be zero 
	simultaneously,
	we have
	\[A+B \ge \lambda_2 = \frac{\lambda_1\lambda_2}{\lambda_1}=\frac{\det(A+B)}{\lVert A+B\rVert}, \]
	which is just (\ref{eqAB}).
	\end{proof}
	
	\medskip
	Recall that a closed subspace $\cK$ of $\cH$ is called \emph{reducing for $H$} 
	if $\cK\subset\dom(H)$ and $P_{\cK} H \subset H P_{\cK}$, where $P_{\cK}$ denotes 
	the orthogonal projection onto $\cK$. This is equivalent to the 
	split $\dom(H) = (\cK \cap \dom (H)) + (\cK^{\perp} \cap \dom (H))$ 
	together with the requirement that both $\cK$ and $\cK^\perp$ are invariant under $H$, i.e., for
	$\eta \in \cK \cap \dom(H)$ we have $H \eta \in \cK$ and similarly on $\cK^\perp \cap \dom(H)$.
	In particular, this implies  that $P_{\cK} \eta \in \dom (H)$ if $\eta \in \dom (H)$.
	
	\begin{thm}\label{Theo3.2}
		Let $\cK \subset \cH$ be a closed reducing subspace for $H$ such that, for some $c,v >0$,
				\[(H \xi | \xi)\ge c\lVert \xi \rVert^2\;\: \mbox{for all}\;\: \xi\in\cK^\bot \cap \dom(H),\quad
		(V\zeta |\zeta)\ge v \lVert \zeta \rVert^2 \;\: \mbox{for all}\;\:\zeta \in \cK.\] 
		Then
		$$
		H+V\geq\frac{cv}{c+2\lVert V \rVert }.
		$$
	\end{thm}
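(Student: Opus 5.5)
The plan is to reduce the operator inequality to a $2\times 2$ matrix inequality and then apply Lemma~\ref{Lem3.1}. Fix $f\in\dom(H)$ and split it as $f=\zeta+\xi$ with $\zeta=P_{\cK}f\in\cK$ and $\xi=(I-P_{\cK})f\in\cK^\perp$. Since $\cK$ is reducing, both pieces lie in $\dom(H)$, and $H\zeta\in\cK$, $H\xi\in\cK^\perp$. Hence the cross terms vanish and
\[
(Hf|f)=(H\zeta|\zeta)+(H\xi|\xi)\ge c\lVert\xi\rVert^2,
\]
where we used $H\ge 0$ on $\cK$. For the potential, no orthogonality is available, so we keep all terms:
\[
(Vf|f)=(V\zeta|\zeta)+2\,\mathrm{Re}(V\xi|\zeta)+(V\xi|\xi).
\]

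If $\zeta=0$ or $\xi=0$, the claim is immediate. Indeed, if $\zeta=0$ we get $((H+V)f|f)\ge c\lVert f\rVert^2$, and if $\xi=0$ we get $((H+V)f|f)\ge v\lVert f\rVert^2$. Both $c$ and $v$ dominate $cv/(c+2\lVert V\rVert)$.

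Otherwise, set $\hat\zeta=\zeta/\lVert\zeta\rVert$, $\hat\xi=\xi/\lVert\xi\rVert$ and $u=(\lVert\zeta\rVert,\lVert\xi\rVert)^T\in\RR^2$. Introduce the matrices
\[
A=\begin{pmatrix}0&0\\0&c\end{pmatrix},\qquad
B=\begin{pmatrix}(V\hat\zeta|\hat\zeta)&(V\hat\xi|\hat\zeta)\\ (V\hat\zeta|\hat\xi)&(V\hat\xi|\hat\xi)\end{pmatrix}.
\]
Here $B$ is the Gram matrix of the positive form $(V\cdot|\cdot)$ on $\{\hat\zeta,\hat\xi\}$, so $B\ge 0$. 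The two displays above then give
\[
((H+V)f|f)\ge((A+B)u|u).
\]
Since $c>0$, $A+B\neq 0$, so Lemma~\ref{Lem3.1} applies with $x=0$, $y=c$, $z=0$, $\beta=(V\hat\zeta|\hat\zeta)$, $\delta=(V\hat\xi|\hat\xi)$, $w=(V\hat\xi|\hat\zeta)$.

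Next we bound the determinant and the norm. Since $\beta\delta-\lvert w\rvert^2=\det B\ge 0$,
\[
\det(A+B)=\beta(c+\delta)-\lvert w\rvert^2=\beta c+(\beta\delta-\lvert w\rvert^2)\ge \beta c\ge vc,
\]
where the last step uses $\hat\zeta\in\cK$. For the norm, $\lVert A\rVert=c$ and $\lVert B\rVert\le\mathrm{tr}\,B=\beta+\delta\le 2\lVert V\rVert$, so $\lVert A+B\rVert\le c+2\lVert V\rVert$. Lemma~\ref{Lem3.1} therefore yields
\[
((A+B)u|u)\ge\frac{cv}{c+2\lVert V\rVert}\,\lVert u\rVert^2 .
\]
Finally, $\lVert u\rVert^2=\lVert\zeta\rVert^2+\lVert\xi\rVert^2=\lVert f\rVert^2$ by orthogonality, which proves the claim for every $f\in\dom(H)$.

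The step needing the most care is the passage from the operator to the matrix. It must be checked that the $H$-part really decouples: this is where reducing is used, both for $\zeta,\xi\in\dom(H)$ and for the vanishing cross terms. One must also check that the scalar cross term of $V$ is correctly encoded by the off-diagonal entry $w$ when $u$ has real coordinates; this holds because $2\,\mathrm{Re}(V\xi|\zeta)=2\lVert\zeta\rVert\lVert\xi\rVert\,\mathrm{Re}\,w$. The rest is routine.
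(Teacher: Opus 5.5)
Your proof is correct and is essentially the paper's argument: the same orthogonal splitting along the reducing subspace and the same $2\times 2$ reduction fed into Lemma~\ref{Lem3.1}, with only cosmetic changes (you put $c$ rather than $\gamma=(H\xi|\xi)/\lVert\xi\rVert^2$ into $A$, and you bound $\lVert B\rVert$ by the trace rather than the Frobenius norm). One small point is unjustified: in the case $\zeta=0$, the claim $c\ge cv/(c+2\lVert V\rVert)$ needs $v\le c+2\lVert V\rVert$, which follows from $v\le\lVert V\rVert$ by testing the hypothesis on a nonzero element of $\cK$. This tacitly requires $\cK\ne\{0\}$; if $\cK=\{0\}$, $v$ is unconstrained and the statement can fail. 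The paper's own treatment of that case relies on the same assumption implicitly.
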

	
	\begin{proof} Let $\eta \in\dom(H)$ and decompose $\eta = \zeta + \xi$ with
	$$
	\zeta = P_\cK\eta
	, \quad 
	\xi = (I-P_\cK)\eta. 
	$$
	Since $\cK$ was assumed to be a reducing subspace, it is guaranteed that $\zeta , \xi$ belong to $\dom (H)$. 
	Assume first that both $\zeta$ and $\xi$ are not zero. We have
	\[(H\eta|\eta)=(H\zeta|\zeta)+(\zeta| H\xi)+(H\xi|\zeta)+(H\xi|\xi)=
	(H\zeta|\zeta)+(H\xi|\xi) \,\]
	and letting $\gamma:=(H\xi|\xi)/\lVert \xi\rVert^2$, this may be written as
	\[(H\eta|\eta)=(H\zeta|\zeta)+\begin{pmatrix}\lVert \zeta\rVert \!\! &\!\! \lVert \xi\rVert\end{pmatrix}
	\begin{pmatrix}0 & 0\\ 0 & \gamma\end{pmatrix} \begin{pmatrix}\lVert \zeta\rVert \\ \lVert \xi\rVert\end{pmatrix}
	=(H\zeta|\zeta)+\begin{pmatrix}\lVert \zeta\rVert \!\! &\!\! \lVert \xi\rVert\end{pmatrix}
	A \begin{pmatrix}\lVert\zeta\rVert \\ \lVert \xi\rVert\end{pmatrix}\]
	with $A=\begin{pmatrix}0 & 0\\ 0 & \gamma\end{pmatrix} \ge 0$. Put
	$$
	\beta := \left( V\frac{\zeta}{\lVert \zeta\rVert} \bigg \vert \frac{\zeta}{\lVert \zeta\rVert}\right), 
	\quad
	w:=\left( V\frac{\zeta}{\lVert \zeta\rVert} \bigg \vert \frac{\xi}{\lVert \xi\rVert}\right),
	\quad
	\delta := \left( V\frac{\xi}{\lVert \xi\rVert} \bigg \vert \frac{\xi}{\lVert \xi\rVert}\right).
	$$
	We so may write
	\[(V\eta|\eta)=\begin{pmatrix}\lVert \zeta\rVert\!\! &\!\! \lVert\xi\rVert\end{pmatrix}
	\begin{pmatrix} \beta & w\\ \overline{w} & \delta\end{pmatrix} \begin{pmatrix}\lVert\zeta\rVert \\ \lVert\xi\rVert\end{pmatrix}
	=\begin{pmatrix}\lVert\zeta\rVert\!\! &\!\! \lVert\xi\rVert\end{pmatrix}
	B \begin{pmatrix}\lVert\zeta\rVert \\ \lVert\xi\rVert\end{pmatrix}\]
	with $B=\begin{pmatrix} \beta & w\\ \overline{w} & \delta\end{pmatrix}$.
	Clearly,  $\beta,\delta \ge 0$, and Cauchy-Schwarz yields $\lvert w \rvert^2 \leq \beta \delta$.
	Thus, $B \ge 0$. 
	Lemma \ref{Lem3.1} now
	gives
	\[A+B \ge \frac{\beta(\gamma+\delta)-\lvert w\rvert^2}{\lVert A+B\rVert} \ge \frac{\beta\gamma}{\lVert A\rVert+\lVert B\rVert}\]
	Obviously, $\lVert A\rVert =\gamma$. Estimating the norm of $B$ from above by the Frobenius norm and using again
	Cauchy-Schwarz, we get
	\[\lVert B\rVert^2 \le \beta^2 +2\lvert w\rvert^2+\delta^2 \le 4 \lVert V\rVert^2.\]
	Thus,
	\[\frac{\beta\gamma}{\lVert A\rVert +\lVert B\rVert}\ge \frac{\beta\gamma}{\gamma +2\lVert V\rVert}\ge \frac{cv}{c+2\lVert V\rVert},\]
	the last inequality resulting from the assumptions $(V\zeta|\zeta)\geq v \lVert \zeta\rVert^2$
	and $(H\xi|\xi)\geq c\lVert \xi\rVert^2$, which imply that $\beta \ge v$ and $\gamma \ge c$.
	In summary, in the case where both $\zeta$ and $\xi$ are not zero,
	\begin{equation*}
		\left( (H+V) \eta | \eta \right)
		\ge 
		(H \zeta | \zeta) +
		\frac{cv}{c+2\lVert V\rVert} \lVert \eta \rVert^2
		\ge \frac{cv}{c+2\lVert V\rVert} \lVert \eta \rVert^2.
	\end{equation*}
	A direct inspection shows that this inequality (ignoring the middle term) holds as well if 
	either $\zeta = 0$ or $\xi = 0$. Indeed, if $\zeta \not = 0$ and $\xi = 0$, we use $\beta \geq v$ to obtain
	\[
	\frac{\left( (H+V) \eta | \eta \right)}{\lVert \zeta \rVert^2} 
	= (H \zeta | \zeta) + \beta \lVert \zeta \rVert^2
	\ge \beta \lVert \zeta \rVert^2 \ge v \lVert \zeta \rVert^2
	\ge 
	\frac{cv}{c+2\lVert V \rVert} \lVert \zeta \rVert^2 . 
	\]
	In the case where $\zeta = 0$ and $\xi \not = 0$, we take into account that $\lVert B\rVert \ge \beta \ge v$
	and $\gamma \ge c$ to get
	\[
	\frac{\left( (H+V) \eta | \eta \right)}{\lVert \xi \rVert^2} = \gamma + \delta
	= 
	\frac{\gamma c + \delta c + \gamma \lVert B \rVert + \delta \lVert B \rVert}{c + \lVert B \rVert}
	\ge
	\frac{cv}{c + \lVert B \rVert} \ge \frac{cv}{c + 2\lVert V \rVert}.
	\]
	This completes the proof.
	\end{proof}
	The implication (ii) $\Longrightarrow$ (i) of Theorem~\ref{Theo2.1} follows from Theorem~\ref{Theo3.2} applied to $\cK=P_E(\cH)$.
	In that case $\cK$ is a closed reducing subspace for $H$ and we have $(H\xi|\xi) > E\lVert \xi\rVert^2$ for
	$\xi$ in $\dom(H) \cap \cK^\perp$.
	The assumption $P_EVP_E \ge \kappa P_E$ gives $(V\zeta|\zeta) \ge \kappa \lVert\zeta\rVert^2$ for 
	$\zeta=P_E g$ in $\cK$. From Theorem~\ref{Theo3.2} we therefore obtain that
	\[H+V \ge \frac{\kappa E}{E+2\lVert V\rVert}.\]

	\section{The case $\boldsymbol{H=-\Delta}$} 	\label{Sec4}

	This section is devoted to the {\em proof of Theorem \ref{Theo2.2}} and inequality \eqref{Dia}. We start with the latter.
	
	\begin{proof}[Proof of \eqref{Dia}] Let $\gamma$ be strictly larger than the $\liminf$.
	Then there are  $R_n\to\infty$ and $y_n\in\RR^d$ such that
	$$
	\frac{1}{R_n^d}\int_{C_{R_n}(y_n)}V(x)\mathrm{d}x<\gamma .
	$$
	Pick $z_n=y_n+(1/2,\ldots,1/2)$, so that the cubes $C_{R_n-1}(z_n)\subset C_{R_n}(y_n)$ 
	have the same center and the distance of their boundaries is $1/2$. We can therefore find  $\varphi_n\in C^2(\RR^d)$ with
	$$
	\boldsymbol{1}_{C_{R_n-1}(z_n)}\le\varphi_n\le \boldsymbol{1}_{C_{R_n}(y_n)},\quad
	\lvert\nabla \varphi_n \rvert\le 4 .
	$$
	Clearly, 
	$
	(R_n-1)^d\le\lVert \varphi_n\rVert^2\le R_n^d$ and since $\nabla\varphi_n=0$ on $C_{R_n-1}(z_n)$,
	we get
	$\lVert \nabla \varphi_n\rVert^2\le 16\left(R_n^d-(R_n-1)^d\right)$.
	For the normalized $\psi_n:=\lVert \varphi_n\rVert^{-1}\varphi_n$, we obtain
	$$
	(-\Delta\psi_n\mid \psi_n)=\lVert\nabla \psi_n\rVert^2\to 0\;\:\mbox{as}\;\:n\to\infty
	$$
	and
	\begin{align*}
		(V\psi_n\mid\psi_n)&=\int_{C_{R_n}(y_n)}V(x)\psi_n^2(x) \mathrm{d}x\\
		&\le \int_{C_{R_n}(y_n)}V(x)\frac{1}{(R_n-1)^d} \mathrm{d}x\\
		&=\frac{R_n^d}{(R_n-1)^d}\frac{1}{R_n^d}\int_{C_{R_n}(y_n)}V(x)\mathrm{d}x\\
		&<\gamma
	\end{align*}
	for $n$ large enough, so that
	$$
	\inf\sigma(-\Delta +V)\le \inf_{n\in\NN}\left((-\Delta\psi_n\mid \psi_n)+(V\psi_n\mid\psi_n)\right)<\gamma, 
	$$
	which completes the proof. 
	\end{proof}
	\begin{proof}[Proof of Theorem \ref{Theo2.2} -- easy part]
	Since $H+V$ is strictly positive if and only if $\inf\sigma(H+V) >0$,
	it is clear from~(\ref{Dia}) that (i) implies (iii). Furthermore, if (iii) holds and the lower limit is $\varrho >0$,
	then $V$ is obviously $(R,\varrho/2)$-thick for all sufficiently large $R$. 
	This proves the implication (iii) $\Longrightarrow$ (ii). 
	\end{proof}
	It remains to prove the implication (ii) $\Longrightarrow$ (i). 
	Our  proof is based on Theorem~\ref{Theo3.2}, and it also delivers an explicitly computable lower bound; we state the corresponding estimates in the following theorem and its corollary. 
	\begin{thm}\label{Theo4.1}
		If $R > 0$, $C = (0,R)^d$, $V \in L^\infty(C)$, $V \ge 0$, and $H_C = -\Delta_C$ is the Laplacian in $L^2(C)$
		with Neumann boundary conditions on $C$, then
		$$ 
		H_C+V\geq \quad\frac{\pi^2}{\pi^2+2R^2\lVert V\rVert_{L^\infty (C)}} \frac{1}{R^d} \int_CV(x) \mathrm{d}x.
		$$
	\end{thm}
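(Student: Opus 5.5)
We apply Theorem~\ref{Theo3.2} with $\cH = L^2(C)$, $H = H_C$ (the Neumann Laplacian), and the potential $V$ acting as a bounded multiplication operator. We take as $\cK$ the kernel of $H_C$, which for the Neumann Laplacian on the connected cube $C$ is the one-dimensional space of constants, $\cK = \mathrm{span}\{\Eins_C\}$.

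Since $\cK$ is an eigenspace of the selfadjoint operator $H_C$, it lies in $\dom(H_C)$. It is also reducing: $P_\cK = \mathbf{1}_{\{0\}}(H_C)$ is a spectral projection and hence commutes with $H_C$. It remains to supply the two constants $c$ and $v$.

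For $c$ we use the spectral gap of the Neumann Laplacian on $(0,R)^d$. By separation of variables, the eigenvalues are $\frac{\pi^2}{R^2}(k_1^2+\dots+k_d^2)$ with $k_j \in \NN_0$, and the eigenfunctions are products of cosines forming an orthonormal basis. The first nonzero eigenvalue is therefore $\pi^2/R^2$. Since $H_C$ has compact resolvent and $\cK^\perp$ is spanned by the non-constant eigenfunctions, we get
\[
(H_C\xi\mid\xi) \ge \frac{\pi^2}{R^2}\lVert\xi\rVert^2 \quad\text{for all } \xi\in\cK^\perp\cap\dom(H_C),
\]
so $c=\pi^2/R^2$. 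Equivalently, this is the Poincaré--Wirtinger inequality on the cube with its sharp constant.

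For $v$, take $\zeta = a\Eins_C$. Then
\[
(V\zeta\mid\zeta) = |a|^2\int_C V \,\mathrm{d}x = \frac{1}{R^d}\int_C V\,\mathrm{d}x\;\lVert\zeta\rVert^2,
\]
because $\lVert\zeta\rVert^2 = |a|^2R^d$. So $v$ is the mean of $V$ over $C$. If $v=0$ the claimed bound is trivial, since $H_C+V\ge0$; otherwise Theorem~\ref{Theo3.2} applies.

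Also $\lVert V\rVert = \lVert V\rVert_{L^\infty(C)}$ as an operator norm. Theorem~\ref{Theo3.2} then gives
\[
H_C+V \ge \frac{cv}{c+2\lVert V\rVert} = \frac{(\pi^2/R^2)\,v}{\pi^2/R^2+2\lVert V\rVert_\infty}.
\]
Multiplying numerator and denominator by $R^2$ yields exactly the stated constant.

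The main point needing care is the justification of the spectral gap: identifying the Neumann spectrum of the cube via tensor products of the one-dimensional Neumann Laplacians on $(0,R)$, whose eigenvalues are $(\pi k/R)^2$ with cosine eigenfunctions. Everything else is a direct substitution into Theorem~\ref{Theo3.2}.
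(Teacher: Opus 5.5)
Your proposal is correct and follows the paper's proof essentially verbatim: you apply Theorem~\ref{Theo3.2} with $\cK=\ker(H_C)$ the constants, $c=\pi^2/R^2$ from the Neumann spectral gap (which the paper takes directly from the Poincar\'e inequality), and $v$ equal to the mean of $V$ over $C$. Your separate treatment of the case $v=0$ plays the same role as the paper's remark that the case $V=0$ is trivial.
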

\begin{proof}
If $V=0$ there is nothing to prove. Hence, we assume throughout that $V$ is not identically zero.
Recall that $H_C+V$ is the unique selfadjoint operator associated with the closed quadratic form 
	$$
	W^{1,2}(C)\rightarrow [0,\infty), \quad u\mapsto\int_{C}|\nabla u|^2\mathrm{d}x + \int_{C}V|u|^2 \mathrm{d}x 
	$$
	defined in the Hilbert space $L^2(C)$. From the Poincar\'{e} inequality we know that
	\begin{equation}\label{PI}
		\int_C \lvert \nabla f \rvert^2 \mathrm{d} x\geq \frac{\pi^2}{R^2} \lVert f \rVert^2
	\end{equation}
	provided $f\bot\ker (H_C)=\RR\cdot 1$. The closed space  $\cK:=\ker(H_C)$ is a reducing subspace for $H_C$, 
	and  as $(H_C f | f)$ is equal to the integral in (\ref{PI}), we see that the assumption $(H_C \xi|\xi)\ge c\lVert\xi\rVert^2$ of 
	Theorem~\ref{Theo3.2} is satisfied with $c={\pi^2}/{R^2}$.
	Since $\cK$ consists of constant functions, the assumption $(V\zeta|\zeta) \ge v\lVert\zeta\rVert^2$ of Theorem~\ref{Theo3.2} holds with \
	$$v=\frac{1}{R^d} \int_CV(x) \mathrm{d}x.
		$$
	The assertion now follows from Theorem~\ref{Theo3.2}.
\end{proof}
\begin{cor}\label{Cor4.2}
Let $V \in L^\infty_{\rm loc}(\RR^d)$
and $V \ge 0$. Then
\[
\inf\sigma (-\Delta+V) \ge 
\sup_{R >0}\inf_{y \in R\ZZ^d}\frac{\pi^2}{\pi^2+2R^2 \lVert V\rVert_{L^\infty(C_R(y))}}
\frac{1}{R^d}\int_{C_R(y)}V(x) \mathrm{d}x.
\]
In particular, if $V$ is $(R,\varrho)$-thick, then
\[
\inf\sigma (-\Delta+V) \ge \frac{\pi^2 \varrho}{\pi^2+2R^2 \lVert V\rVert_{L^\infty(\RR^d)}} .
\]
\end{cor}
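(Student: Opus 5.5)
The plan is to decompose $\RR^d$ into the disjoint cubes $C_R(y)$, $y\in R\ZZ^d$, and to compare $-\Delta$ with the direct sum of Neumann Laplacians on these cubes (Neumann bracketing). Fix $R>0$ and write $\gamma_R$ for the infimum over $y\in R\ZZ^d$ appearing on the right-hand side. If $\gamma_R=0$ there is nothing to show, so we may assume it is positive.

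\emph{Step 1.} For $u\in W^{1,2}(\RR^d)$, the restriction $u|_{C_R(y)}$ lies in $W^{1,2}(C_R(y))$ for every $y$. Since the cubes cover $\RR^d$ up to a null set (their boundaries), we get
\[
\int_{\RR^d}\lvert\nabla u\rvert^2\mathrm{d}x+\int_{\RR^d}V\lvert u\rvert^2\mathrm{d}x=\sum_{y\in R\ZZ^d}\Bigl(\int_{C_R(y)}\lvert\nabla u\rvert^2\mathrm{d}x+\int_{C_R(y)}V\lvert u\rvert^2\mathrm{d}x\Bigr).
\]
Each summand is the quadratic form of $H_{C_R(y)}+V|_{C_R(y)}$ evaluated at $u|_{C_R(y)}$.

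\emph{Step 2.} Apply Theorem~\ref{Theo4.1} on each cube, after translating $C_R(y)$ to $(0,R)^d$. Translation commutes with the Laplacian and with the Neumann condition, and $V|_{C_R(y)}\in L^\infty$ because $V$ is locally bounded. This gives that each summand is at least
\[
\frac{\pi^2}{\pi^2+2R^2\lVert V\rVert_{L^\infty(C_R(y))}}\,\frac{1}{R^d}\int_{C_R(y)}V\,\mathrm{d}x\;\lVert u\rVert_{L^2(C_R(y))}^2 .
\]
This quantity is in turn at least $\gamma_R\lVert u\rVert_{L^2(C_R(y))}^2$. Here one small point needs care: Theorem~\ref{Theo4.1} is stated as an operator inequality, and we need it in form sense on all of $W^{1,2}$. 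This is automatic, since the form domain of $H_C+V$ is $W^{1,2}(C)$ and $H_C+V$ is the operator associated with that form.

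\emph{Step 3.} Summing over $y$ gives that the form of $-\Delta+V$ is at least $\gamma_R\lVert u\rVert^2$ on a form core. Hence $\inf\sigma(-\Delta+V)\ge\gamma_R$, and taking the supremum over $R>0$ yields the first claim.

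The step needing most care is the meaning of $-\Delta+V$ when $V$ is only locally bounded. I would define it as the form sum, with form domain $\{u\in W^{1,2}: \int V\lvert u\rvert^2<\infty\}$. $C_c^\infty$ is a form core for this form, and for $u\in C_c^\infty$ the sum in Step 1 is finite, so the argument goes through on the core.

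For the \emph{in particular} part, let $V$ be $(R,\varrho)$-thick. If $\lVert V\rVert_{L^\infty(\RR^d)}=\infty$ the bound is trivial (it reads $\ge 0$), so assume it is finite. Then on every cube we have $\lVert V\rVert_{L^\infty(C_R(y))}\le\lVert V\rVert_{L^\infty(\RR^d)}$ and $R^{-d}\int_{C_R(y)}V\ge\varrho$. Inserting these into the first bound for this fixed $R$ gives the stated estimate, which also completes the implication (ii)$\Longrightarrow$(i) of Theorem~\ref{Theo2.2}.
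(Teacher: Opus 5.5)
Your proof is correct and follows the paper's argument. Both use Neumann bracketing over the cubes $C_R(y)$, $y\in R\ZZ^d$, apply a translated Theorem~\ref{Theo4.1} on each cube, and then specialize to $(R,\varrho)$-thick $V$. The one difference is that you do the bracketing directly on the quadratic forms rather than through the spectrum of $\bigoplus_{k}(-\Delta+V)_{C_R(k)}$. This also avoids the point that the spectrum of an infinite direct sum is in general only the closure of the union of the spectra.
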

\begin{proof}
 By Dirichlet-Neumann bracketing, see \cite[Section XIII]{RS78}, we have
	\begin{equation*}
		-\Delta+V\geq  \bigoplus_{k\in R\ZZ^d}(-\Delta+V)_{C_R(k)},
	\end{equation*}
	where $R > 0$ is arbitrary, and where $(-\Delta+V)_{C_R(k)}$\ denotes the restriction of $H+V$\ to $L^2(C_R(k))$\ with Neumann boundary conditions. The assertion now follows from (a shifted version of) Theorem~\ref{Theo4.1} and the fact that
	\[
	\sigma\left( \bigoplus_{k\in R\ZZ^d}(-\Delta+V)_{C_R(k)} \right) = \bigcup_{k \in R \ZZ^d} \sigma \bigl( (-\Delta+V)_{C_R (k)} \bigr) .  \qedhere
	\]
\end{proof}
Corollary~\ref{Cor4.2} gives the implication (ii)$\Rightarrow$(i) of Theorem~\ref{Theo2.2} and the proof of Theorem \ref{Theo2.2} is complete.\qed
\section{Miscellanea}
	\subsection{(\ref{QUP}) does not imply $\boldsymbol{H + V \gg 0}$}
	Here we derive Observation \ref{Theo2.3}, 
	which in particular implies that the title of this subsection is correct. Actually, this is an understatement: 
	while (\ref{QUP}) holds for any nontrivial $V$, we already know that ${H + V \gg 0}$ requires $V$ to be $(R,\rho)$-thick.
\medskip	
For the Fourier transform of $f \in L^2(\mathbb{R}^d)$,
	$$
	\hat{f}(\xi)=(2\pi)^{-d/2}\int f(x)e^{-i\xi x}\mathrm{d}x,$$ it follows that 
	$$(-\Delta f)\hat{~}(\xi)=\lvert \xi \rvert^2\hat{f}(\xi)$$
	and hence $P_E$ is the orthogonal projection onto the subspace of all functions $f\in L^2(\mathbb{R}^d)$ 
	whose Fourier
	transform $\hat{f}(\xi)$ is supported in the ball $\lvert \xi \rvert^2 \le E$. 
	
	\begin{proof}[Proof of Observation \ref{Theo2.3}] In the case at hand,
	$$
	(P_Ef)(x)=(2\pi)^{-\frac{d}{2}}\int_{\lvert \xi \rvert^2\le E}\hat{f}(\xi)e^{i\xi x}d\xi
	$$
	is an analytic function on $\RR^d$  
	for every $f\in L^2(\RR^d)$. Consequently,  
	if $f$ is in the range of $P_E$, then $f$ is analytic on all of $\RR^d$. If in addition $Vf=0$,
	then $f$ must vanish a.e. on $\{ x:V(x)>0\}$, which set has strictly positive measure as soon as $V$ is not $0$ a.e. 
	We can now use the strong form of the identity theorem for multivariable analytic functions
	which says that for a nontrivial analytic function the measure of its zero set is zero; see \cite{Mit2020}.
	This gives that $f=0$. 
	\end{proof}
	
	\subsection{Uncertainty principles for operators with irregular coefficients}\label{sub5.2}
	
	As mentioned earlier, the equivalence in Theorem \ref{Theo2.2} in combination with Theorem \ref{Theo2.1} gives that (\ref{EUP}) holds for all selfadjoint operators satisfying $H\ge \eta(-\Delta)$ with some $\eta>0$ and all $(R,\rho)$-thick potentials. Note that for the comparison of the operators, it is no problem to consider non-densely defined forms, so $H$ could be a selfadjoint operator in some closed subspace of $L^2(\RR^d)$.

	\begin{cor}\label{cor5.1} Let $\eta >0$ and $H$ be a selfadjoint operator in some closed subspace $\mathcal{H}\subset L^2(\RR^d)$, satisfying $H\ge \eta(-\Delta)$ and $V\in L^\infty(\RR^d)$ an $(R,\rho)$-thick potential. Then
	$$
	H+V\ge \min\{\eta, 1\}\frac{\pi^2 \varrho}{\pi^2+2R^2 \lVert V\rVert_{L^\infty(\RR^d)}}=: E_0 .
	$$
	Moreover, for any $E<E_0$, $H$ and $V$ satisfy an \eqref{EUP} at $E$ with $\mu(E)\ge E_0-E$, as well as a \eqref{sUP} at $E$ with $\kappa(E)\ge E-E_0$.
	\end{cor}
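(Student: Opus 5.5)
The plan is to reduce everything to Corollary~\ref{Cor4.2} by a comparison of quadratic forms, and then to read off the uncertainty principles exactly as in the proof of (i)$\Rightarrow$(iii) of Theorem~\ref{Theo2.1}. Throughout, $V$ acting in $\cH$ is understood as the compression $P_\cH V|_\cH$ of the multiplication operator; this changes neither $(Vf|f)$ for $f\in\cH$ nor the bound $\lVert V\rVert\le\lVert V\rVert_{L^\infty(\RR^d)}$. The hypothesis $H\ge\eta(-\Delta)$ I interpret in the form sense. Every $f$ in the form domain of $H$ lies in $W^{1,2}(\RR^d)$, and for such $f$
\[
\eta\int_{\RR^d}\lvert\nabla f\rvert^2\,\mathrm{d}x\le h[f],
\]
where $h$ is the closed form of $H$. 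This is the point where non-densely defined forms cause no trouble: we only ever evaluate forms on vectors of $\cH$.

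\medskip
\emph{Step 1 (the lower bound).} Put $\theta:=\min\{\eta,1\}$. For $f\in\dom(H)$, the hypothesis together with $V\ge0$ and $\theta\le1$ gives
\[
((H+V)f|f)\ge \eta\lVert\nabla f\rVert^2+(Vf|f)\ge \theta\bigl(\lVert\nabla f\rVert^2+(Vf|f)\bigr).
\]
The bracket is the quadratic form of $-\Delta+V$ evaluated at $f\in W^{1,2}(\RR^d)$. The Dirichlet--Neumann bracketing argument in the proof of Corollary~\ref{Cor4.2} is a statement about this form on all of $W^{1,2}(\RR^d)$, so it yields
\[
\lVert\nabla f\rVert^2+(Vf|f)\ge \frac{\pi^2\varrho}{\pi^2+2R^2\lVert V\rVert_{L^\infty(\RR^d)}}\lVert f\rVert^2
\]
for $(R,\varrho)$-thick $V$. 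Combining the two displays gives $H+V\ge E_0$.

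\medskip
\emph{Step 2 (the uncertainty principles).} Let $E<E_0$, $f\in\dom(H)$ and $(Hf|f)\le E\lVert f\rVert^2$. Step~1 gives
\[
(Vf|f)\ge E_0\lVert f\rVert^2-(Hf|f)\ge (E_0-E)\lVert f\rVert^2,
\]
which is (\ref{EUP}) with $\mu(E)\ge E_0-E$. As observed right after the definition of (\ref{EUP}), this implies (\ref{sUP}) with $\kappa=\mu$, so $\kappa(E)\ge E_0-E$. The inequality ``$\kappa(E)\ge E-E_0$'' in the statement is then trivially true as well, since $E-E_0<0<E_0-E$; I suspect it is a sign typo for $E_0-E$.

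\medskip
\emph{Main obstacle.} The only delicate step is the functional-analytic bookkeeping in Step~1. We must make sure that $H\ge\eta(-\Delta)$ really places the form domain of $H$ inside $W^{1,2}(\RR^d)$. We must also make sure that Corollary~\ref{Cor4.2}, stated as a bound on the spectrum, is available as a form inequality on all of $W^{1,2}(\RR^d)$, and not only for vectors of the closed subspace $\cH$. I would handle the latter by noting that $\inf\sigma(-\Delta+V)$ equals the infimum of the form of $-\Delta+V$ over normalized $u\in W^{1,2}(\RR^d)$. That infimum in particular bounds the form from below at every $f\in\cH\cap W^{1,2}(\RR^d)$.
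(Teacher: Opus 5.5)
Your proposal is correct and follows the paper's own argument. The form comparison $H+V\ge\min\{\eta,1\}(-\Delta+V)$ combined with Corollary~\ref{Cor4.2} gives the bound $E_0$, and the uncertainty principles are then read off as in the implications (i)$\Rightarrow$(iii)$\Rightarrow$(ii) of Theorem~\ref{Theo2.1}, with $\mu(E)\ge E_0-E$ and $\kappa=\mu$. You are also right that the stated $\kappa(E)\ge E-E_0$ is presumably a sign slip for $E_0-E$; as written it holds trivially, since $E-E_0<0<\kappa$.
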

\begin{proof}
 By the ellipticity assumption above, it follows that 
 $$
 H\ge \eta(-\Delta),
 $$
 so that 
 $$
 H+V\ge \min\{\eta, 1\}(-\Delta + V),
 $$
 which yields the asserted lower bound by Corollary \ref{Cor4.2} and so in turn the uncertainty estimates at low energies by Theorem~\ref{Theo2.1}. 
\end{proof}

Here is a concrete class of examples that arise in this way.
 Assume that $\Omega\subset \RR^d$ is open, $a(x)\ge \eta$ is a  $d\times d$ matrix for $x\in \Omega$ so that $\Omega\ni x\mapsto a_{i,j}(x)$ is locally integrable for all $i,j= 1, \ldots, d$. Moreover, let $W:\Omega\to[0,\infty]$ be locally integrable. 
	
\begin{rem} (1) Define the selfadjoint operator $H$ in $L^2(\Omega)$ via its closed quadratic form, 
	\begin{align}\label{genop}
	D(\mathbf{h})&=\set{u\in W^{1,2}_0(\Omega)}{(a(\cdot)\nabla u(\cdot)|\nabla u(\cdot)), W|u|^2\in L^1(\Omega)},\\\nonumber
	\mathbf{h}[u]&=\int_\Omega(a\nabla u|\nabla u)\mathrm{d}x+\int_\Omega W|u|^2\mathrm{d}x .
	\end{align}
	Then $H\ge \eta (-\Delta)$ in the sense of (not necessarily densely defined) quadratic forms and the assertion of the previous corollary applies.
	
	(2) In the setting of (1) it is more natural to consider a bounded measurable function $V:\Omega\to [0,\infty)$ instead of a potential defined on all of $\RR^d$.  If $V$ is $(R,\rho)$-thick in the sense that
	\[\frac{1}{R^d}\int_{C_R(y)\cap \Omega} V(x) \mathrm{d}x \ge \varrho\]
	for all $y \in \mathbb{R^d}$ with $C_R(y)\cap \Omega\not=\emptyset$, then we can extend it to a function $\tilde{V}$ on $\RR^d$ by setting $\tilde{V}$ equal to $\varrho$ on $\RR^d\setminus\Omega$. Evidently, $\tilde{V}$ is $(R,\rho)$-thick and $\| \tilde{V}\|_\infty=\| V\|_\infty$ so that the conclusion of the preceding corollary applies for $H$ and $V$.
\end{rem}

We would like to compare our uncertainty estimates to those obtained in \cite{StollmannS-21}. The method in this latter paper is restricted to the case $V=\Eins_D$, where $D$ satisfies a strong relative density condition, strictly more incisive than what we need here. On the other hand, in the more specialized situation, Theorem 1.1 of the latter paper gives a  slightly better energy range and \eqref{sUP} than our result above. At the same time, our proof here is much simpler.

	\subsection{Examples of thick and non-thick potentials}
	We consider $A=-\Delta+V$  on $L^2(\RR^d)$ with $V \ge 0$ in $L^\infty(\RR^d) \setminus\{0\}$.
	In the following we permanently employ Theorem~\ref{Theo2.3}.
	
	\medskip
	Let first $d=1$. To ensure that $A\gg 0$, we must find $R>0$ and $\varrho>0$  such that
	$(1/R)\int_y^{y+R}V(x) \mathrm{d}x \ge \varrho$ for all $y \in \RR$. This is impossible if
	the essential supremum of $V$ over $(y,\infty)$ goes to zero as $y$ goes to infinity,
	which in particular happens if $V$ is piecewise continuous and monotonically decreases
	to zero. Thus, in this case $A$ is not strictly positive. Situations in which $V$ does
	not decrease to zero are more interesting. The function $V(x)=\lvert \sin(x) \rvert$ is $(\pi,2/\pi)$-thick
	and hence induces a strictly positive operator $A$. Clearly, $A \gg 0$ even for every
	periodic potential $V$. Choosing natural numbers $k_1 < k_2 < k_3 < \ldots$ and letting
	$V(x)=1$ on $[k_j,k_j+1] \cup [-k_j,-k_j-1]$ and $V(x)=0$ otherwise, we obtain that 
	$T \gg 0$ if the distances $k_{j+1}-k_j$ remain bounded whereas $A$ is not strictly
	positive if $k_{j+1}-k_j \to \infty$ as $j \to\infty$. To have a last example, let
	$V(x)=1$ for $x$ in $\bigcup_{k\in \ZZ \setminus \{0\}}[k,k+1/\lvert k\rvert^\beta]$ with some
	$\beta >0$ and $V(x)=0$ elsewhere.
	Then $(1/n)\int_1^{n+1} V(x) \mathrm{d}x \to 0$ as $n \to\infty$, implying that $A$ is not strictly positive.
	
	\medskip
	Let now $d \ge 2$. Again $A$ is not strictly positive if the essential supremum of $V$ over 
	$\{y \in \RR^d: \lvert y \rvert \ge R\}$  decays to zero as $R \to\infty$. On the other hand, $A\gg 0$
	if $V$ is periodic, for example, if $V(x)=1$ on the black cubes and $V(x)=0$ on the white
	cubes of an infinite $d$-dimensional chessboard. Finally, take spherical shells 
	$S_k=\{x\in \RR^d: k \le \lvert x \rvert \le k+\delta_k\}$ ($k=1,2, \ldots$) with $\delta_k\le 1$.
	For $\delta_k=1$, the volume $\lvert S_k \rvert$ of $S_k$ is $B_d[(k+1)^d-k^d] > B_ddk^{d-1}$,
	where $B_d$ is the volume of the unit ball in $\RR^d$. Thus, for each $\beta >0$,
	we can choose $\delta_k \in (0,1)$
	so that the shells do not intersect and $\lvert S_k \rvert=B_ddk^{d-1}/k^\beta$. A simple computation reveals
	that $(1/n^d)\sum_{k=1}^{n}\lvert S_k\rvert\to 0$ as $n \to \infty$. Consequently, if $V=1$ on the union
	of the shells $S_k$
	and $V=0$ otherwise, then $A$ is not strictly positive. However, if we take $\delta_k=\mu\in(0,1)$
	for all $k$, then $\lvert S_k\rvert =B_d [(k+\mu)^d-k^\mu] >B_dd\mu k^{d-1}$ and hence $(1/n^d)\sum_{k=1}^{n}\lvert S_k\rvert$
	remains bounded away from zero as $n\to \infty$. Thus, in this case the characteristic function of the union of the
	shells generates a strictly positive operator $A$.

\section{Application to control theory} 

Let us discuss an application of our results to control theory. We assume that $H$ is a selfadjoint operator in $\mathcal{H}\subset L^2(\RR^d)$ as in Subsection 5.2 above, i.e. $H\ge \eta(-\Delta)$ for some $\eta>0$. 
For $T > 0$ we consider the linear control problem
\begin{equation} \label{eq:control1}
	\dot f(t) +Hf(t) = V u(t), \quad t \in (0,T], \quad f(0) = f_0 \in \mathcal{H},
\end{equation} 
where $V$ is the operator of multiplication by a non-negative function $V \in L^\infty (\RR^d)$, and  $u \in L^r ((0,T) ; \mathcal{H})$  with $r \in [1,\infty]$ is the so-called \textbf{control function}. The solution of \eqref{eq:control1} is given by Duhamel's formula
\[
f (t) = \mathrm{e}^{-tH} f_0 + \int_0^t \mathrm{e}^{-(t-s)H} V u(s) \mathrm{d}s, \quad t\in [0,T] .
\]
Given four numbers $\alpha,K\geq 0$, $T > 0$ and $r \in [1,\infty]$, the linear control problem \eqref{eq:control1} is called {\bf cost-uniformly $\boldsymbol{\alpha}$-controllable} in time $T$ with respect to $L^r((0,T);\mathcal{H})$ and cost $K$ if for every $f_0\in \mathcal{H}$ there is a control function $u \in L^r((0,T);\mathcal{H})$ such that 
	\[
	\lVert f(T) \rVert \leq \alpha \lVert f_0 \rVert
	\quad \text{and}\quad 
	\lVert u \rVert_{L^r((0,T);\mathcal{H})} \le K \lVert f_0 \rVert .
	\]

\medskip
Based on earlier works \cite{TrelatWX-20,HuangWW-20,LiuWXY-22}, it has recently been observed in \cite{EgidiGST-24} 
that an uncertainty relation at low energy of the form \eqref{sUP} is sufficient to guarantee $\alpha$-controllability for 
all $\alpha > 0$ and sufficiently large $T > 0$. This is in contrast to earlier results, where even for $\alpha$-controllability an uncertainty relation at all energies was required. Indeed, as a consequence of our Corollary~\ref{cor5.1} in combination with
\cite[Proposition~3.1]{EgidiGST-24} we obtain a so-called weak observability estimate. Note that the first assumption of \cite[Proposition~3.1]{EgidiGST-24} is satisfied by Corollary~\ref{cor5.1}, while its second assumption follows easily from the spectral theorem. This weak observability estimate is in turn equivalent to $\alpha$-controllability, see \cite[Theorem~1]{TrelatWX-20} for the case of Hilbert spaces (i.e.\ $r=2$), and \cite[Theorem~2.8]{EgidiGST-24} for the general framework of Banach spaces. Thus, we obtain the following corollary.
\begin{cor}
	Assume that $V$ is $(R,\varrho)$-thick for some $R,\varrho > 0$ and let $\alpha \in (0,1)$. Then there exist $T_0,K > 0$ such that for all $T \geq T_0$ and all $r \in [1,\infty]$, the linear control problem \eqref{eq:control1} is cost-uniformly $\alpha$-controllable in time $T$ with respect to $L^r((0,T);\mathcal{H})$ and cost $K$.
\end{cor}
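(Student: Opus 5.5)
The plan is to chain Corollary~\ref{cor5.1} with the abstract results from \cite{EgidiGST-24} and \cite{TrelatWX-20}, which reduce $\alpha$-controllability to a low-energy uncertainty relation.

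\emph{Step 1: uncertainty relation at low energy.} Since $V$ is $(R,\varrho)$-thick and $H\ge\eta(-\Delta)$, Corollary~\ref{cor5.1} gives
\[
H+V\ge E_0=\min\{\eta,1\}\frac{\pi^2\varrho}{\pi^2+2R^2\lVert V\rVert_{L^\infty(\RR^d)}}>0 .
\]
Fix some $E\in(0,E_0)$. Then \eqref{EUP} holds at $E$ with $\mu=E_0-E$. Since \eqref{EUP} implies \eqref{sUP}, we get
\[
P_EVP_E\ge\kappa P_E, \qquad \kappa\ge E_0-E>0 .
\]
Because $V\ge0$ is bounded, $V^2\ge \lVert V\rVert^{-1}V$. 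Hence
\[
\lVert VP_Ef\rVert^2\ge\lVert V\rVert^{-1}(VP_Ef|P_Ef)\ge \kappa\lVert V\rVert^{-1}\lVert P_Ef\rVert^2 .
\]
This is the form of uncertainty relation needed for the control operator $V$. It is the first hypothesis of \cite[Proposition~3.1]{EgidiGST-24}.

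\emph{Step 2: dissipation on high frequencies.} The second hypothesis of \cite[Proposition~3.1]{EgidiGST-24} is a decay estimate on the complement of the spectral subspace. By the spectral theorem,
\[
\lVert e^{-tH}(I-P_E)f\rVert\le e^{-Et}\lVert (I-P_E)f\rVert, \qquad \lVert e^{-tH}\rVert\le1 .
\]
Feeding Steps~1 and~2 into that proposition yields a weak observability estimate. It has the form
\[
\lVert e^{-TH}g\rVert\le C_T\lVert V e^{-(T-\cdot)H}g\rVert_{L^{r'}((0,T);\mathcal H)}+\alpha\lVert g\rVert
\]
for the adjoint semigroup, where $H$ and $V$ are selfadjoint. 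The error parameter can be made smaller than any prescribed $\alpha>0$ once $T\ge T_0$, with $C_T$ bounded uniformly for such $T$.

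\emph{Step 3: from observability to controllability.} Next I would invoke the duality theorem \cite[Theorem~1]{TrelatWX-20} for $r=2$ and \cite[Theorem~2.8]{EgidiGST-24} for general $r\in[1,\infty]$. This converts the weak observability estimate into cost-uniform $\alpha$-controllability with cost $K$ controlled by $C_T$.

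The main obstacle is checking that the constants are uniform. Concretely, one must verify that $T_0$ and $K$ can be chosen independently of $T\ge T_0$ and of $r$. My first attempt would be this: for $T>T_0$, use the zero control on $(0,T-T_0)$, using contractivity of $e^{-tH}$. Then apply the control built on an interval of length $T_0$. Changing $r$ only changes $K$ by a factor $T_0^{1/r-1/r'}$-type via Hölder on a fixed interval. Taking the maximum over the endpoint cases then gives a single $K$.
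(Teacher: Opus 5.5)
\textbf{Overall route.} Your route is the paper's route. Corollary~\ref{cor5.1} supplies the low-energy uncertainty relation, and the spectral theorem gives $\lVert e^{-tH}(I-P_E)\rVert\le e^{-Et}$. Then \cite[Proposition~3.1]{EgidiGST-24} turns these into a weak observability estimate, and \cite[Theorem~1]{TrelatWX-20} resp.\ \cite[Theorem~2.8]{EgidiGST-24} dualize it into $\alpha$-controllability. You also correctly use $\kappa\ge E_0-E$; the ``$E-E_0$'' printed in Corollary~\ref{cor5.1} is a sign slip.

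\textbf{The step from \eqref{sUP} to a bound on $\lVert VP_Ef\rVert$ is wrong as written.} The operator inequality $V^2\ge\lVert V\rVert^{-1}V$ is false. By functional calculus, $\sigma(V)\subset[0,\lVert V\rVert]$, and there $\lambda^2\le\lVert V\rVert\lambda$. So the true inequality is $V^2\le\lVert V\rVert V$, which points the other way. Your resulting bound $\lVert VP_Ef\rVert^2\ge\kappa\lVert V\rVert^{-1}\lVert P_Ef\rVert^2$ already fails for the constant (thick) potential $V\equiv\varepsilon\in(0,1)$. There \eqref{sUP} holds with $\kappa=\varepsilon$, and your claim would read $\varepsilon^2\ge 1$. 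The repair is one line of Cauchy--Schwarz:
\[
\kappa\lVert P_Ef\rVert^2\le (VP_Ef|P_Ef)\le\lVert VP_Ef\rVert\,\lVert P_Ef\rVert ,
\qquad\text{hence}\qquad \lVert P_Ef\rVert\le\kappa^{-1}\lVert VP_Ef\rVert .
\]
This is exactly the form of the first hypothesis of \cite[Proposition~3.1]{EgidiGST-24}.

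\textbf{Uniformity of the constants.} Your argument here, which the paper leaves implicit in its citations, is sound. Apply zero control on $(0,T-T_0)$, then apply the time-$T_0$ control to the datum $f(T-T_0)$. Since $e^{-tH}$ is contractive and the system is autonomous, this gives $\lVert f(T)\rVert\le\alpha\lVert f(T-T_0)\rVert\le\alpha\lVert f_0\rVert$ at the same cost.

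The H\"older factor is not of ``$T_0^{1/r-1/r'}$-type'', however. Construct the control for $r=\infty$, with cost $K_\infty$; it is supported on an interval of length $T_0$. Then $\lVert u\rVert_{L^r}\le T_0^{1/r}\lVert u\rVert_{L^\infty}$. So $K=\max\{1,T_0\}K_\infty$ works simultaneously for all $r\in[1,\infty]$ and all $T\ge T_0$, and no second endpoint case is needed.
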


Note that in the present general context, we do not obtain 
$\alpha = 0$, a property called null-controllability. This would require an uncertainty relation at all energies. We want to stress the fact that we can treat a general class of operators $H$, merely satisfying $H \geq \eta(-\Delta)$ for some $\eta > 0$; see Section~\ref{sub5.2} for examples. As our examples cover operators with irregular coefficients, it seems (currently) beyond reach to show an uncertainty relation at all energies by standard methods to the best of our knowledge. Note, moreover, that in our previous result, the control operator $V$ is merely supposed to be a $(R,\varrho)$-thick potential, generalizing the case of indicator functions of thick sets treated previously in the literature.

%

%
%

\end{document}